\newtheorem{thm}{Theorem}
\newtheorem{lem}{Lemma}
\newtheorem{remark}{Remark}
\newtheorem{definition}{Definition}
\begin{document}

\begin{center}
{\Large\bf New Inequalities of the type of Hadamard's through $s-(\alpha,m)$ Co-ordinated Convex functions}
\vspace*{0.5cm}

{M. I. Bhatti}$^{(1)}$, {M. Muddassar}$^{(2\,*)}$ and {F. Yasin}$^{(1)}$

%
%

\end{center}

\vspace*{-0.3cm}

\begin{center}
$^{(1)}$ \small{\textit{Department of Mathematics, University of Engineering \& Technology, Lahore, Pakistan, uetzone@hotmail.com, farkhanda.yasin@yahoo.com}}\\
$^{(2)}$ \small{\textit{Department of Mathematics, University of Engineering \& Technology, Taxila, Pakistan, malik.muddassar@gmail.com ($^{(*)}$ \small{\textit{for correspondence}})}}\\
\end{center}

\date{\today}
\subjclass[2010]{26D15, 26A51, 39A10.}
\keywords{Hadamard type inequality, $s-(\alpha,m)$-Convex function, Co-ordinated Convex functions, H\"{o}lder's Integral Inequality, Power mean Inequality.}

\begin{center}
{\bf Abstract:} This monograph is associated with the renowned Hermite-Hadamard's integral inequality of $2$-variables on the co-ordinates. In this article we established several inequalities of the type of Hadamard's for the mappings whose absolute values of second order partial derivatives are $s-(\alpha,m)$-convex mappings.
\end{center}

{\setcounter{section}{0}}
\markboth{\underline{\hspace{3.45in}M. Muddassar, M. I. Bhatti and F. Yasin}}
{\underline{\hspace{1pt}Inequalities through Generalized Co-ordinated Convex functions\hspace{2.60in}}}\pagestyle{myheadings}

\section{Introduction}\label{sec1}
The role of inequalities involving convex functions is greeted from the very beginning and is now widely acknowledged as one of the prime dividing forces behind the evolution of several modern areas of mathematics and has been given considerable attention. One of the well-nigh notable inequality for convex mappings is declared as Hermite-Hadamard(Hadamard) inequality. Define as\\
Let a function $f:I \subset \mathbb{R} \rightarrow \mathbb{R}$ is convex on $I$, induces
\begin{equation}\label{HH}
    f\left(\frac{a+b}{2}\right)\leq\frac{1}{b-a}\int_a^b f(x) dx\leq\frac{f(a)+f(b)}{2}
\end{equation}
for $a,b\in I,$ with $a<b$.\\
This inequality gives us an estimate, from below and from above, of the average value of $f : [a, b] \rightarrow\mathbb{R}$ for $a,b\in I,$ with $a<b$.\\
Since 1893 when Hadamard showed his renowned inequality, many mathematicians have been working close to it, and generalizing and refining this inequality in many dissimilar ways and with a lot of applications. In this article, we are going to talked about some inequalities linked with Hermite-Hadamard's on co-ordinates.\\
Consider a bidimensional interval $\Delta =: [l, m] \times [p, q]$ in $\mathbb{R}^2$ with $l < m$ and $p < q$. A map $f : \Delta \rightarrow \mathbb{R}$ is supposed to be convex on $\Delta$ when the coming inequality:
$$f(\alpha u + (1 - \alpha) v, \alpha w + (1 - \alpha)z) \leq  \alpha f (u, w) + (1 - \alpha) f (v,z)$$
agrees, $\forall\,\, (u, w) , (v,z) \in \Delta\,\,\wedge\,\,\alpha \in [0, 1]$. A map $f : \Delta \rightarrow \mathbb{R}$ is supposed to be convex on the co-ordinates $(\Delta)$ when the partial mappings $f_y : [l, m] \rightarrow \mathbb{R}$, $f_y (\zeta) = f (\zeta, y)$ and $f_x : [p, q] \rightarrow \mathbb{R}$, $f_x (\eta) = f (x, \eta)$ are convex, $\forall\,\,x \in [l, m]\,\,\wedge\,\,y \in [p, q]$ (go through \cite{r2}).
A schematic explanation for co-ordinated convex functions might be posited as:
\begin{definition}\label{d1}
A function $f : \Delta \rightarrow \mathbb{R}$ supposed to ba a co-ordinated convex on $\Delta\,\,\forall\,\, \alpha, \beta \in [0, 1]$ and $(\mu, \nu), (\phi, \psi) \in \Delta$, when the coming inequality agrees:
\begin{eqnarray*}
&&\!\!\!\!\!\!\!\!\!\!f(\alpha \mu + (1 - \alpha) \nu, \beta \phi + (1 - \beta) \psi) \leq\\
&&\alpha \beta f(\mu, \phi) + \beta(1 - \alpha)f(\nu, \phi) + \alpha(1 - \beta)f(\mu, \psi) + (1 - \alpha)(1 - \beta)f(\nu, \psi).
\end{eqnarray*}
\end{definition}
Clearly, all convex functions defined on $\mathbb{R}$ is also convex functions on co-ordinates $\mathbb{R}^2$. In spite of this, there are co-ordinated convex functions which are not convex in $\mathbb{R}$ \cite{r2}. Likewise, in \cite{r2}, S. Dragomir found the next inequality of Hadamard's type for co-ordinated convex functions on a rectangular plane $\mathbb{R}^2$. See bibliography for various latest consequences referring Hermite-Hadamard's inequality for convex function on co-ordinates $\mathbb{R}^2$.
\begin{thm}\label{t1}
Let a function $f : \Delta \rightarrow \mathbb{R}$ is co-ordinated convex on $\Delta$, induces
\begin{eqnarray*}
&&\!\!\!\!\!\!\!\!\!\!\!\!\! f\left(\frac{a+b}{2}, \frac{c+d}{2}\right) \leq \frac{1}{2}\left[\frac{1}{b-a}\int_a^b f\left(u, \frac{c+d}{2}\right)du + \frac{1}{d-c}\int_c^d f\left(\frac{a+b}{2}, v\right)dv\right]\\
&&\indent\indent\indent\indent\indent \leq \frac{1}{(b-a)(d-c)} \int_a^b \int_c^d f(u,v)dvdu \\
&&\indent\indent\indent\indent\indent \leq \frac{1}{4}\left[\frac{1}{b-a}\int_a^b f(u, c)du + \frac{1}{b-a}\int_a^b f(u, d)du\right.\\
 && \indent\indent\indent\indent\indent\indent\indent\indent\indent\indent \left.+\frac{1}{d-c} \int_c^d f(a, v)dv + \frac{1}{d-c} \int_c^d f(b, v)dv\right]\\
&&\indent\indent\indent\indent\indent \leq \frac{f(a, c)+f(a,d)+f(b,c) + f(b,d)}{4}
\end{eqnarray*}
\end{thm}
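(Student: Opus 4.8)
The plan is to reduce the whole two-dimensional chain to repeated applications of the one-variable Hermite--Hadamard inequality \eqref{HH}, exploiting the fact that co-ordinated convexity of $f$ forces every partial map $f_y(\cdot)=f(\cdot,y)$ to be convex on $[a,b]$ and every partial map $f_x(\cdot)=f(x,\cdot)$ to be convex on $[c,d]$. Since the asserted statement is a chain of five terms, I would establish the four successive inequalities one at a time, in each case invoking only the appropriate half (left or right) of \eqref{HH} on a suitable partial function and then integrating in the remaining variable.

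For the first inequality I would apply the left-hand part of \eqref{HH} to the convex partial map $u\mapsto f\!\left(u,\tfrac{c+d}{2}\right)$ on $[a,b]$ and to the convex partial map $v\mapsto f\!\left(\tfrac{a+b}{2},v\right)$ on $[c,d]$; each gives a lower bound on $f\!\left(\tfrac{a+b}{2},\tfrac{c+d}{2}\right)$ in terms of one of the two single integrals, and averaging the two produces exactly the first bound. For the second inequality I would again use the left half of \eqref{HH}, but now on the inner variable: fixing $u$ and applying it to $v\mapsto f(u,v)$ bounds $f\!\left(u,\tfrac{c+d}{2}\right)$ above by $\tfrac{1}{d-c}\int_c^d f(u,v)\,dv$, and integrating this over $u\in[a,b]$ turns the first single integral into the double integral; the symmetric argument handles the second single integral, and averaging closes the step.

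For the third inequality I would switch to the right-hand part of \eqref{HH}. Fixing $u$ and applying it to $v\mapsto f(u,v)$ gives $\tfrac{1}{d-c}\int_c^d f(u,v)\,dv\le \tfrac{f(u,c)+f(u,d)}{2}$; integrating over $u$ expresses the double integral through the boundary integrals $\int_a^b f(u,c)\,du$ and $\int_a^b f(u,d)\,du$. Running the same computation with the roles of the two variables interchanged produces the two remaining boundary integrals, and averaging the two estimates yields the four-term bound. Finally, for the fourth inequality I would apply the right half of \eqref{HH} separately to each of the four one-variable boundary integrals (for instance $\tfrac{1}{b-a}\int_a^b f(u,c)\,du\le \tfrac{f(a,c)+f(b,c)}{2}$), and summing the four resulting estimates divided by four collapses everything to the corner average $\tfrac{f(a,c)+f(a,d)+f(b,c)+f(b,d)}{4}$.

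The argument is essentially bookkeeping once the reduction is in place, so I do not anticipate a genuine obstacle; the only point requiring care is the legitimacy of interchanging the integration in the remaining variable with the one-dimensional inequality, which is immediate here since the bounds hold pointwise and the partial functions are integrable on the rectangle. Keeping the two halves of \eqref{HH} matched to the correct step---left halves for the two lower estimates, right halves for the two upper estimates---is the main place where a careless slip could occur.
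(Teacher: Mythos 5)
Your proof is correct. Note that the paper itself gives no proof of this theorem: it is quoted as a known result of Dragomir (reference \cite{r2} in the bibliography), and your argument—applying the left half of the one-dimensional inequality \eqref{HH} to the partial maps for the two lower estimates and the right half for the two upper estimates, then integrating and averaging—is precisely the standard proof given in that source, so your proposal matches the intended argument in both substance and structure.
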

M. Z. Sarikaya et. al. in \cite{r7} proved an identity as below and build few more Hermite-Hadamard's type integral inequalities for co-ordinated convex functions on rectangular plane.
\begin{lem}\label{L1}
Let $f : \Delta \subset \mathbb{R}^2 \rightarrow \mathbb{R}^2$ be a partially differentiable function on $\Delta :=
[a, b] \times [c, d]$ in $\mathbb{R}^2$ with $a < b\,\,\wedge\,\,c < d$. If $\frac{\partial ^2f}{\partial \lambda \partial \mu} \in L(\Delta)$, for $\lambda, \mu \in (0, 1]$ then the following equality agrees:
\begin{eqnarray}\label{le1}
&&\nonumber\!\!\!\!\!\!\!\!\!\! \frac{f(a, c)+f(a,d)+f(b,c) + f(b,d)}{4} + \frac{1}{(b-a)(d-c)} \int_a^b \int_c^d f(x,y)dydx\\
&&\nonumber\indent\indent\indent - \frac{1}{2}\left[\frac{1}{b-a}\int_a^b [f(x,c)+f(x,d)]dx + \frac{1}{d-c}\int_c^d [f(a,y)+f(b,y)]dy\right]\\
&&=\!\!\frac{(b\!-\!a)\!(d\!-\!c)}{4}\!\!\int_0^1\!\!\!\int_0^1\!\!\! (1\!-\!2\lambda)(1\!-\!2\mu)\!\frac{\partial ^2f}{\partial \lambda \partial \mu}\! \left(\lambda a\!+\!(1-\lambda)b, \mu c\!+\!(1-\mu)d\right)d\lambda d\mu
\end{eqnarray}
\end{lem}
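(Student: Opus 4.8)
The plan is to establish the identity by starting from the double integral on the right-hand side and reducing it, through two successive integrations by parts followed by an affine change of variables, to the combination of terms on the left. Throughout I read $\frac{\partial^2 f}{\partial\lambda\partial\mu}\bigl(\lambda a+(1-\lambda)b,\mu c+(1-\mu)d\bigr)$ as the mixed second partial of $f$ in its two slots, evaluated at the affine point. Writing $g(\lambda,\mu):=f\bigl(\lambda a+(1-\lambda)b,\mu c+(1-\mu)d\bigr)$, the chain rule gives $\partial_\lambda\partial_\mu g=(a-b)(c-d)\,\frac{\partial^2 f}{\partial\lambda\partial\mu}(\cdots)$, so that the factor $(a-b)(c-d)=(b-a)(d-c)$ this produces will cancel the prefactor $\frac{(b-a)(d-c)}{4}$ and leave the correct constant $\frac14$ in front of the corner terms.

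Concretely, denote by $J$ the integral $\int_0^1\int_0^1(1-2\lambda)(1-2\mu)\,\partial_\lambda\partial_\mu g\,d\mu\,d\lambda$. First I would fix $\lambda$ and integrate the inner $\mu$-integral by parts, taking $u=1-2\mu$ and $dv=\partial_\mu(\partial_\lambda g)\,d\mu$; since $(1-2\mu)$ equals $-1$ at $\mu=1$ and $+1$ at $\mu=0$, this yields the boundary terms $-\partial_\lambda g(\lambda,1)-\partial_\lambda g(\lambda,0)$ together with $2\int_0^1\partial_\lambda g\,d\mu$. Substituting back and integrating each resulting term by parts in $\lambda$ (again with $u=1-2\lambda$) produces three kinds of contributions: the fully boundary terms give the four corner values $g(1,1)=f(a,c)$, $g(0,1)=f(b,c)$, $g(1,0)=f(a,d)$, $g(0,0)=f(b,d)$; the mixed boundary/integral terms give four single integrals of the form $\int_0^1 g(\lambda,0)\,d\lambda$, $\int_0^1 f(a,\mu c+(1-\mu)d)\,d\mu$, and so on; and the remaining term is $4\int_0^1\int_0^1 g\,d\lambda\,d\mu$. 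Finally I would apply the substitutions $x=\lambda a+(1-\lambda)b$ and $y=\mu c+(1-\mu)d$, under which $\int_0^1\phi(\lambda a+(1-\lambda)b)\,d\lambda=\frac{1}{b-a}\int_a^b\phi(x)\,dx$ and likewise in $y$, converting the single integrals into the $\frac{1}{b-a}\int_a^b$ and $\frac{1}{d-c}\int_c^d$ expressions and the double integral into $\frac{1}{(b-a)(d-c)}\int_a^b\int_c^d f$. Multiplying the resulting expression for $J$ by the chain-rule factor $\frac{1}{(b-a)(d-c)}$ and then by the prefactor $\frac{(b-a)(d-c)}{4}$ collapses everything to exactly the left-hand side.

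The computation is elementary, so the real work is organizational rather than conceptual: the main obstacle is bookkeeping, namely tracking the signs coming from the two evaluations of $(1-2\lambda)$ and $(1-2\mu)$ at the endpoints, matching each of the four single integrals to the correct corner configuration, and ensuring that the Jacobian factors from the two changes of variables combine with the prefactor to leave the clean constants $\frac14$, $\frac12$, and $1$. One should also record the standing hypothesis that $\frac{\partial^2 f}{\partial\lambda\partial\mu}\in L(\Delta)$, together with enough smoothness for the partial mappings to be absolutely continuous, since this is precisely what licenses both the integration by parts and the use of Fubini's theorem to iterate the two integrals.
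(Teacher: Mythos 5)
Your proof is correct: the chain-rule factor $(a-b)(c-d)=(b-a)(d-c)$, the two successive integrations by parts with the endpoint values $(1-2\lambda),(1-2\mu)\in\{+1,-1\}$, the identification of the four corner terms $g(1,1)=f(a,c)$, $g(0,1)=f(b,c)$, $g(1,0)=f(a,d)$, $g(0,0)=f(b,d)$, and the affine substitutions all check out, and after multiplying by $\frac{(b-a)(d-c)}{4}\cdot\frac{1}{(b-a)(d-c)}$ the expression collapses to exactly the stated left-hand side. Note that the paper gives no proof of this lemma at all (it is quoted from \cite{r7}), and your double-integration-by-parts argument is precisely the standard one in that cited source, so you have simply supplied, correctly, the details the paper omits.
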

He demonstrate in this way:
\begin{thm}\label{t2}
Let $f : \Delta \subset \mathbb{R}^2 \rightarrow \mathbb{R}^2$ be a partially differentiable function on $\Delta :=[a, b] \times [c, d]$ in $\mathbb{R}^2$ with $a < b\,\,\wedge\,\,c < d$. If $\left| \frac{\partial ^2f}{\partial t \partial s}\right|$ is a convex function on $\Delta$, induces
\begin{eqnarray}\label{te2}
&&\nonumber\!\!\!\!\!\!\!\!\!\! \left|\frac{f(a, c)+f(a,d)+f(b,c) + f(b,d)}{4} + \frac{1}{(b-a)(d-c)} \int_a^b \int_c^d f(x,y)dydx - A\right|\\
&&\!\!\!\! \leq\!\! \frac{(b-a)(d-c)}{16}\!\!\left(\!\frac{\left| \frac{\partial ^2f}{\partial t \partial s}(a, c)\right|+\left| \frac{\partial ^2f}{\partial t \partial s}(a, d)\right|+\left| \frac{\partial ^2f}{\partial t \partial s}(b, c)\right|+\left| \frac{\partial ^2f}{\partial t \partial s}(b, d)\right|}{4}\!\right)
\end{eqnarray}
where $$\mathcal{A}= \frac{1}{2}\left[\frac{1}{b-a}\int_a^b [f(x,c)+f(x,d)]dx + \frac{1}{d-c}\int_c^d [f(a,y)+f(b,y)]dy\right]$$
\end{thm}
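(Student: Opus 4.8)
The plan is to apply the integral identity of Lemma \ref{L1} directly and then estimate the resulting double integral. First I would invoke \eqref{le1} to rewrite the bracketed expression inside the modulus on the left-hand side of \eqref{te2} as
\[
\frac{(b-a)(d-c)}{4}\int_0^1\int_0^1 (1-2\lambda)(1-2\mu)\frac{\partial^2 f}{\partial\lambda\partial\mu}\bigl(\lambda a+(1-\lambda)b,\mu c+(1-\mu)d\bigr)\,d\lambda\,d\mu,
\]
so that taking absolute values and passing the modulus inside the double integral (the triangle inequality for integrals) yields the upper bound
\[
\frac{(b-a)(d-c)}{4}\int_0^1\int_0^1 |1-2\lambda|\,|1-2\mu|\left|\frac{\partial^2 f}{\partial\lambda\partial\mu}\bigl(\lambda a+(1-\lambda)b,\mu c+(1-\mu)d\bigr)\right|d\lambda\,d\mu.
\]

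Next I would exploit the hypothesis that $\left|\frac{\partial^2 f}{\partial t\partial s}\right|$ is co-ordinated convex on $\Delta$. Applying the defining inequality of Definition \ref{d1} with $\alpha=\lambda$, $\beta=\mu$, and corner data $(a,c),(b,c),(a,d),(b,d)$ gives the pointwise bound
\[
\left|\frac{\partial^2 f}{\partial t\partial s}\bigl(\lambda a+(1-\lambda)b,\mu c+(1-\mu)d\bigr)\right| \leq \lambda\mu\,M_{ac}+\lambda(1-\mu)M_{ad}+(1-\lambda)\mu\,M_{bc}+(1-\lambda)(1-\mu)M_{bd},
\]
where $M_{ac}=\left|\frac{\partial^2 f}{\partial t\partial s}(a,c)\right|$ and similarly for the other three corner values. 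Substituting this into the integral splits the estimate into four terms, each of which factors as a product of a $\lambda$-integral and a $\mu$-integral, since both the weight $|1-2\lambda|\,|1-2\mu|$ and the convexity coefficients separate.

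The computational heart of the argument is then the one-dimensional moment
\[
\int_0^1 |1-2\lambda|\,\lambda\,d\lambda = \int_0^1 |1-2\lambda|\,(1-\lambda)\,d\lambda = \frac14,
\]
which I would verify by splitting the integral at $\lambda=\tfrac12$, where the sign of $1-2\lambda$ changes; the second equality follows from the substitution $\lambda\mapsto 1-\lambda$. Consequently each of the four factored double integrals equals $\frac14\cdot\frac14=\frac{1}{16}$, and summing the four contributions produces $\frac{1}{16}\bigl(M_{ac}+M_{ad}+M_{bc}+M_{bd}\bigr)$. Multiplying by the prefactor $\frac{(b-a)(d-c)}{4}$ gives $\frac{(b-a)(d-c)}{64}\bigl(M_{ac}+M_{ad}+M_{bc}+M_{bd}\bigr)$, which is precisely the claimed right-hand side of \eqref{te2} once the factor $4$ is absorbed into the average.

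I expect the only real care --- rather than a genuine obstacle --- to lie in correctly matching the convex-combination structure of the integrand to Definition \ref{d1} (keeping track of which corner value pairs with which product of $\lambda,\mu$ factors) and in the clean evaluation of the weighted moment $\int_0^1|1-2\lambda|\,\lambda\,d\lambda$ via the split at $\lambda=\tfrac12$. Everything else is a routine application of linearity of the integral and Fubini's theorem.
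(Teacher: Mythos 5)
Your proposal is correct and takes essentially the same route as the paper: the paper obtains this statement as the special case $s_1=s_2=1$, $(\alpha_1,m_1)=(\alpha_2,m_2)=(1,1)$ of Theorem \ref{T1}, whose proof likewise starts from the identity of Lemma \ref{L1}, moves the modulus inside the double integral, applies the co-ordinated convexity bound at the four corners, and evaluates the same weighted moments --- your computation $\int_0^1|1-2\lambda|\,\lambda\,d\lambda=\tfrac14$ is exactly the paper's constants $\mathcal{B}=\mathcal{C}=\tfrac14$ in that case. Your final arithmetic also checks out, since $\frac{(b-a)(d-c)}{4}\cdot\frac{1}{16}$ times the sum of the four corner values equals $\frac{(b-a)(d-c)}{16}$ times their average, which is the right-hand side of \eqref{te2}.
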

Two more results were presented in same related to absolute value of $q^{th}$-derivative of a map is convex on co-ordinates on $\Delta$. This paper is in the continuation of \cite{r7}, which provide more general and refine results as presented in \cite{r7}.\\
    The arrangement of this article as:, After this introduction, in section \ref{Sec2} we specify a generalized convex functions and talked about few more inequalities for generalized co-ordinated convex functions linked with Hadamard's inequality.
\section{principal Outcomes}\label{Sec2}
To obtain our major consequences, we first incur the definitions below.
\begin{definition}\label{d2}
A function $f : \Delta \subset \mathbb{R}^2 \rightarrow \mathbb{R}^2$ is supposed to be co-ordinated $s-(\alpha,m)$-convex function on $\Delta$ in first sense or $f$ belonging to the class ${K_{m, 1}^{\alpha, s}}(\Delta)$ , if $\forall\,\,x, y, z, w \in \Delta\, \wedge\, \lambda, \mu \in  [0, 1]$, the coming inequality agrees:
\begin{eqnarray*}
&&\!\!\!\!\!\!\!\!\!\!\!\! f(\lambda x \!+\!(1\!-\!\lambda)z, \mu y \!+\! (1\!-\!\mu) w) \leq \lambda^{\alpha_1 s_1}\mu^{\alpha_2 s_2}f(x,y)\!+\!m_2\lambda^{\alpha_1 s_1}(1\!-\!\mu^{\alpha_2 s_2})f\left(x, \frac{w}{m_2}\right)\\
&&\indent +\!m_1\mu^{\alpha_2 s_2}(1\!-\!\lambda^{\alpha_1 s_1})f\left(\frac{z}{m_1},y\right)\!+\!m_1m_2(1\!-\!\lambda^{\alpha_1 s_1})(1\!-\!\mu^{\alpha_2 s_2})f\left(\frac{z}{m_1},\frac{w}{m_2} \!\right)
\end{eqnarray*}
If the partially defined mappings $f_x(x,v)$ and $f_y(u,y)$ are $s-(\alpha, m)$ convex for some $s_1, s_2 \in (0, 1]$ and $(\alpha_1,m_1), (\alpha_2,m_2) \in [0,1]^2$.
\end{definition}
\begin{definition}\label{d3}
A function $f : \Delta \subset \mathbb{R}^2 \rightarrow \mathbb{R}^2$ is supposed to be co-ordinated $s-(\alpha,m)$-convex function on $\Delta$ in first sense or $f$ belonging to the class ${K_{m, 2}^{\alpha, s}}(\Delta)$ , if $\forall\,\,x, y, z, w \in \Delta\,\,\wedge\,\,\lambda, \mu \in  [0, 1]$, the coming inequality agrees:
\begin{eqnarray*}
&&\!\!\!\!\!\!\!\!\!\!\!\! f(\lambda x \!+\!(1\!-\!\lambda)z, \mu y \!+\! (1\!-\!\mu) w) \leq \lambda^{\alpha_1 s_1}\mu^{\alpha_2 s_2}f(x,y)\!+\!m_2\lambda^{\alpha_1 s_1}(1\!-\!\mu^{\alpha_2})^{s_2}f\left(x, \frac{w}{m_2}\right)\\
&&\indent +\!m_1\mu^{\alpha_2s_2}(1\!-\!\lambda^{\alpha_1})^{s_1}f\left(\frac{z}{m_1},y\right)\!+\!m_1m_2(1\!-\!\lambda^{\alpha_1})^{s_1}(1-\mu^{\alpha_2})^{s_2}f\left(\frac{z}{m_1},\frac{w}{m_2} \right)
\end{eqnarray*}
If the partially defined mappings $f_x(x,v)$ and $f_y(u,y)$ are $s-(\alpha, m)$ convex for some $s_1, s_2 \in (0, 1]$ and $(\alpha_1,m_1), (\alpha_2,m_2) \in [0,1]^2$.
\end{definition}
\begin{thm}\label{T1}
 Let  a function $f : \Delta \subset \mathbb{R}^2 \rightarrow \mathbb{R}^2$ be a partially differentiable on $\Delta := [a, b] \times [c, d]$ in $\mathbb{R}^2$ with $a < b\,\,\wedge\,\,c < d$. If $\left| \frac{\partial ^2f}{\partial t \partial s}\right|$ belonging to the class ${K_{m, 1}^{\alpha, s}}(\Delta)$ and $\frac{\partial ^2f}{\partial \lambda \partial \mu} \in L(\Delta)$, induces the following inequality agrees:
 \begin{eqnarray}\label{te1}
&&\nonumber\!\!\!\!\!\!\!\!\!\!\!\!\left|\frac{f(a, c)+f(a,d)+f(b,c) + f(b,d)}{4} + \frac{1}{(b-a)(d-c)} \int_a^b \int_c^d f(x,y)dydx - \mathcal{A}\right|\\
&&\nonumber\!\!\!\!\!\! \leq \frac{(b-a)(d-c)}{4}\left[\mathcal{B}\mathcal{C}\left\{\left|\frac{\partial ^2f}{\partial \lambda \partial \mu}(a,c)\right|+m_1\left|\frac{\partial ^2f}{\partial \lambda \partial \mu}\left(\frac{b}{m_1}, c\right)\right|\right\}\right.\\
&&\!\!\!\!\!\!\left.+\!\left(\!\frac{1}{2}\!-\!\mathcal{B}\!\right)\left(\!\frac{1}{2}\!-\!\mathcal{C}\!\right)\left\{m_2\left|\frac{\partial ^2f}{\partial \lambda \partial \mu}\left(a, \frac{d}{m_2}\right)\right|+m_1m_2\left|\frac{\partial ^2f}{\partial \lambda \partial \mu}\left(\frac{b}{m_1}, \frac{d}{m_2}\right)\right|\right\}\right]
\end{eqnarray}
where $$\mathcal{B}=\frac{1}{2^{\alpha_1s_1}(\alpha_1s_1+1)}-\frac{1}{2^{\alpha_1s_1}(\alpha_1s_1+2)}+\frac{2}{\alpha_1s_1+2}-\frac{1}{\alpha_1s_1+1}$$
and
$$\mathcal{C}=\frac{1}{2^{\alpha_2s_2}(\alpha_2s_2+1)}-\frac{1}{2^{\alpha_2s_2}(\alpha_2s_2+2)}+\frac{2}{\alpha_2s_2+2}-\frac{1}{\alpha_2s_2+1}$$
\end{thm}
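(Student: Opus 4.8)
The plan is to derive (\ref{te1}) directly from the Sarikaya--type identity recorded in Lemma \ref{L1}. The quantity inside the absolute value on the left of (\ref{te1}) (with $\mathcal{A}$ the bracketed average) is \emph{exactly} the left-hand side of the identity (\ref{le1}); hence it equals
$$\frac{(b-a)(d-c)}{4}\int_0^1\int_0^1 (1-2\lambda)(1-2\mu)\frac{\partial^2 f}{\partial \lambda \partial \mu}\bigl(\lambda a+(1-\lambda)b,\ \mu c+(1-\mu)d\bigr)\,d\lambda\,d\mu .$$
Taking absolute values and passing the modulus inside the double integral by the triangle inequality bounds the target by
$$\frac{(b-a)(d-c)}{4}\int_0^1\int_0^1 |1-2\lambda|\,|1-2\mu|\left|\frac{\partial^2 f}{\partial \lambda \partial \mu}\bigl(\lambda a+(1-\lambda)b,\ \mu c+(1-\mu)d\bigr)\right|\,d\lambda\,d\mu .$$

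Next I would invoke the hypothesis that $\left|\frac{\partial^2 f}{\partial t \partial s}\right|\in K_{m,1}^{\alpha,s}(\Delta)$. Applying Definition \ref{d2} with the identification $x=a$, $z=b$, $y=c$, $w=d$ bounds the integrand modulus by the four-term sum
$$\lambda^{\alpha_1 s_1}\mu^{\alpha_2 s_2}\Bigl|\tfrac{\partial^2 f}{\partial \lambda \partial \mu}(a,c)\Bigr| + m_2\lambda^{\alpha_1 s_1}(1-\mu^{\alpha_2 s_2})\Bigl|\tfrac{\partial^2 f}{\partial \lambda \partial \mu}\bigl(a,\tfrac{d}{m_2}\bigr)\Bigr| + m_1\mu^{\alpha_2 s_2}(1-\lambda^{\alpha_1 s_1})\Bigl|\tfrac{\partial^2 f}{\partial \lambda \partial \mu}\bigl(\tfrac{b}{m_1},c\bigr)\Bigr| + m_1 m_2 (1-\lambda^{\alpha_1 s_1})(1-\mu^{\alpha_2 s_2})\Bigl|\tfrac{\partial^2 f}{\partial \lambda \partial \mu}\bigl(\tfrac{b}{m_1},\tfrac{d}{m_2}\bigr)\Bigr| .$$
The crucial structural feature is that, once multiplied by $|1-2\lambda|\,|1-2\mu|$, each of these four terms factors as a product of a $\lambda$-integral and a $\mu$-integral, so the double integral decouples into products of single integrals.

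The computational heart is therefore two elementary one-variable integrals. Splitting the range at $\lambda=1/2$, where $1-2\lambda$ changes sign, and integrating the two resulting polynomial pieces, I would show
$$\int_0^1 |1-2\lambda|\,\lambda^{\alpha_1 s_1}\,d\lambda = \frac{1}{2^{\alpha_1 s_1}(\alpha_1 s_1+1)}-\frac{1}{2^{\alpha_1 s_1}(\alpha_1 s_1+2)}+\frac{2}{\alpha_1 s_1+2}-\frac{1}{\alpha_1 s_1+1} = \mathcal{B}.$$
Since $\int_0^1 |1-2\lambda|\,d\lambda = \tfrac12$, subtraction gives $\int_0^1 |1-2\lambda|(1-\lambda^{\alpha_1 s_1})\,d\lambda = \tfrac12 - \mathcal{B}$, and the identical computations in $\mu$ produce $\mathcal{C}$ and $\tfrac12 - \mathcal{C}$. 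Substituting these four values into the decoupled integral, collecting the four products together with their factors $m_1,m_2,m_1m_2$, and restoring the prefactor $\frac{(b-a)(d-c)}{4}$ yields the right-hand side of (\ref{te1}).

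I expect the only genuine obstacle to be the bookkeeping: the sign-splitting evaluation of $\int_0^1|1-2\lambda|\lambda^{\alpha_1 s_1}\,d\lambda$ must be carried out carefully so that the four rational pieces combine into exactly the closed form $\mathcal{B}$, and one must keep the pairing of the constants $\mathcal{B},\mathcal{C},\tfrac12-\mathcal{B},\tfrac12-\mathcal{C}$ matched with the correct corner values of $\left|\frac{\partial^2 f}{\partial \lambda \partial \mu}\right|$ when assembling the final estimate.
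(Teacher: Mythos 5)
Your plan follows the paper's own proof step for step: take absolute values in the identity of Lemma \ref{L1}, bound the integrand via Definition \ref{d2} with $(x,z,y,w)=(a,b,c,d)$, decouple the double integral into one-dimensional pieces, and evaluate
$$\int_0^1|1-2\lambda|\,\lambda^{\alpha_1s_1}\,d\lambda=\mathcal{B},\qquad \int_0^1|1-2\lambda|\,\bigl(1-\lambda^{\alpha_1s_1}\bigr)\,d\lambda=\tfrac12-\mathcal{B},$$
together with their $\mu$-analogues $\mathcal{C}$ and $\tfrac12-\mathcal{C}$; all of these evaluations are correct.

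The gap is in your last sentence, where you assert that substituting these values ``yields the right-hand side of (\ref{te1})''. It does not. Writing $F(u,v)=\bigl|\frac{\partial^2 f}{\partial\lambda\partial\mu}(u,v)\bigr|$ for brevity, the decoupled integral equals
\begin{align*}
&\mathcal{B}\,\mathcal{C}\,F(a,c)+m_2\,\mathcal{B}\Bigl(\tfrac12-\mathcal{C}\Bigr)F\Bigl(a,\tfrac{d}{m_2}\Bigr)\\
&\qquad+m_1\Bigl(\tfrac12-\mathcal{B}\Bigr)\mathcal{C}\,F\Bigl(\tfrac{b}{m_1},c\Bigr)+m_1m_2\Bigl(\tfrac12-\mathcal{B}\Bigr)\Bigl(\tfrac12-\mathcal{C}\Bigr)F\Bigl(\tfrac{b}{m_1},\tfrac{d}{m_2}\Bigr),
\end{align*}
so the mixed corners carry the cross products $\mathcal{B}(\tfrac12-\mathcal{C})$ and $(\tfrac12-\mathcal{B})\mathcal{C}$. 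Inequality (\ref{te1}) instead attaches $\mathcal{B}\mathcal{C}$ to both $F(a,c)$ and $m_1F\bigl(\tfrac{b}{m_1},c\bigr)$, and $\bigl(\tfrac12-\mathcal{B}\bigr)\bigl(\tfrac12-\mathcal{C}\bigr)$ to both $m_2F\bigl(a,\tfrac{d}{m_2}\bigr)$ and $m_1m_2F\bigl(\tfrac{b}{m_1},\tfrac{d}{m_2}\bigr)$. The difference between the printed bound and the computed one is
$$\Bigl(2\mathcal{B}-\tfrac12\Bigr)\Bigl[m_1\,\mathcal{C}\,F\Bigl(\tfrac{b}{m_1},c\Bigr)-m_2\Bigl(\tfrac12-\mathcal{C}\Bigr)F\Bigl(a,\tfrac{d}{m_2}\Bigr)\Bigr],$$
which vanishes for all $f$ only when $\mathcal{B}=\tfrac14$ (i.e. $\alpha_1s_1=1$) and can otherwise take either sign; hence the stated right-hand side neither equals nor dominates what the computation produces, and no supplementary estimate closes this gap. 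To be fair, the paper's own proof commits exactly the same leap: its displays (\ref{t1d})--(\ref{t1e}) exhibit the cross pairing, after which (\ref{te1}) is simply asserted. So your proposal reproduces the paper's argument faithfully, and what it actually proves is the cross-paired inequality displayed above --- which is the correct form of the theorem, and which still reduces to \cite[Theorem 2]{r7} when $s_1=s_2=1$ and $(\alpha_1,m_1)=(\alpha_2,m_2)=(1,1)$, since then $\mathcal{B}=\mathcal{C}=\tfrac14$ and the distinction between the two groupings disappears.
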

\begin{proof}
Taking numerical value of equation \ref{le1}, implies
\begin{eqnarray}\label{t1a}
&&\nonumber\!\!\!\!\!\!\!\!\!\! \left|\frac{f(a, c)+f(a,d)+f(b,c) + f(b,d)}{4} + \frac{1}{(b-a)(d-c)} \int_a^b \int_c^d f(x,y)dydx \!-\!\mathcal{A}\right|\\
&&\!\!\!\!\!\!\! \leq \!\!\frac{(\!b\!-\!a\!)\!(\!d\!-\!c\!)}{4}\!\!\int_0^1\!\!\!\! \int_0^1\!\!\! |(\!1\!-\!2\lambda\!)||(\!1\!-\!2\mu\!)| \left|\!\frac{\partial ^2f}{\partial \lambda \partial \mu} \left(\lambda a\!+\!(1\!-\!\lambda)b, \mu c\!+\!(1\!-\!\mu)d\right)\!\right|d\lambda d\mu
\end{eqnarray}
Since $\left| \frac{\partial ^2f}{\partial t \partial s}\right|$ is $s-(\alpha,m)$ co-ordinated convex on $\Delta$, then we have
\begin{eqnarray}\label{t1b}
&&\nonumber\!\!\!\!\!\!\!\!\!\! \int_0^1\!\! \int_0^1 |(1-2\lambda)||(1-2\mu)| \left|\frac{\partial ^2f}{\partial \lambda \partial \mu} \left(\lambda a\!+\!(1-\lambda)b, \mu c\!+\!(1-\mu)d\right)\right|dtds\\
&&\nonumber\indent \leq \int_0^1\!\! \int_0^1 |(1-2\lambda)||(1-2\mu)|\left[\lambda^{\alpha_1 s_1}\mu^{\alpha_2 s_2}\left|\frac{\partial ^2f}{\partial \lambda \partial \mu}(a, c)\right|\right.\\
&&\nonumber\indent\left.+\!m_2\!\lambda^{\alpha_1 s_1}\!(\!1\!-\!\mu^{\alpha_2 s_2}\!)\!\left|\frac{\partial ^2f}{\partial \lambda \partial \mu}\!\left(\!a, \frac{d}{m_2}\!\right)\right|\!+\!m_1\!\mu^{\alpha_2 s_2}\!(\!1\!-\!\lambda^{\alpha_1 s_1}\!)\left|\frac{\partial ^2f}{\partial \lambda \partial \mu}\left(\!\frac{b}{m_1}, c\!\right)\right|\right. \\
&&\indent\indent\indent\indent\indent\indent\indent\left. +m_1m_2(1-\lambda^{\alpha_1 s_1})(1-\mu^{\alpha_2 s_2})\left|\frac{\partial ^2f}{\partial \lambda \partial \mu}\left(\frac{b}{m_1}, \frac{d}{m_2}\right)\right|\right]
\end{eqnarray}
Now consider the integral
\begin{eqnarray}\label{t1c}
&&\nonumber\!\!\!\!\!\!\!\!\!\!\!\!\!\!\!\! \int_0^1\!\!|(1-2\mu)|\!\!\left[\!\! \int_0^{\frac{1}{2}}\!\! |(1\!-\!2\lambda)|\! \left\{\lambda^{\alpha_1 s_1}\mu^{\alpha_2 s_2}\left|\frac{\partial ^2f}{\partial \lambda \partial \mu}(a, c)\right|\!+\!m_2\!\lambda^{\alpha_1 s_1}\!(\!1\!-\!\mu^{\alpha_2 s_2}\!)\!\left|\frac{\partial ^2f}{\partial \lambda \partial \mu}\!\left(\!a, \frac{d}{m_2}\!\right)\right|\right.\right.\\
&&\nonumber\!\!\!\!\!\!\!\!\!\!\!\!\!\!\!\!\!\!\left.\left.+\!m_1\!\mu^{\alpha_2 s_2}\!(\!1\!-\!\lambda^{\alpha_1 s_1}\!)\left|\!\frac{\partial ^2f}{\partial \lambda \partial \mu}\left(\!\frac{b}{m_1},\! c\!\right)\!\right|\!+\!m_1m_2(1\!-\!\lambda^{\alpha_1 s_1})(\!1\!-\!\mu^{\alpha_2 s_2}\!)\left|\!\frac{\partial ^2f}{\partial \lambda \partial \mu}\left(\!\frac{b}{m_1},\! \frac{d}{m_2}\!\right)\!\right|\!\right\}\!d\lambda\right. \\
&&\nonumber\!\!\!\!\!\!\!\!\!\!\!\!\!\!\!\!\!\!\!\!\left. +\!\!\int_{\frac{1}{2}}^1 |(1-2\lambda)| \left\{\lambda^{\alpha_1 s_1}\mu^{\alpha_2 s_2}\left|\frac{\partial ^2f}{\partial \lambda \partial \mu}(a, c)\right|+\!m_2\!\lambda^{\alpha_1 s_1}\!(\!1\!-\!\mu^{\alpha_2 s_2}\!)\!\left|\frac{\partial ^2f}{\partial \lambda \partial \mu}\!\left(\!a, \frac{d}{m_2}\!\right)\right|\right.\right.\\
&&\!\!\!\!\!\!\!\!\!\!\!\!\!\!\!\!\!\!\!\left.\left.\!+\!m_1\!\mu^{\alpha_2 s_2}\!(\!1\!\!-\!\!\lambda^{\alpha_1 s_1}\!)\left|\!\frac{\partial ^2f}{\partial \lambda \partial \mu}\!\left(\!\!\frac{b}{m_1}\!,\! c\!\!\right)\!\right|\!+\!m_1m_2(\!1\!-\!\lambda^{\alpha_1 s_1}\!)(\!1\!-\!\mu^{\alpha_2 s_2}\!)\left|\!\!\frac{\partial ^2f}{\partial \lambda \partial \mu}\!\left(\!\!\frac{b}{m_1}\!,\! \frac{d}{m_2}\!\!\right)\!\right|\!\right\}d\lambda\!\right]\!d\mu
\end{eqnarray}
Applying simple integration rules and simplifying, we have
\begin{eqnarray}\label{t1d}
&&\nonumber\!\!\!\!\!\!\!\!\!\!\!\!\!\!\!\! \int_0^1 |(1 -2\mu)|\left[\left\{\mu^{\alpha_2s_2}\left|\frac{\partial ^2f}{\partial \lambda \partial \mu}(a,c)\right|+m_2\left(1-\mu^{\alpha_2s_2}\right)\left|\frac{\partial ^2f}{\partial \lambda \partial \mu}\left(a, \frac{d}{m_2}\right)\right|\right\}\right.\\
&&\nonumber\indent\indent\indent\left.\left(\frac{1}{2^{\alpha_1s_1}(\alpha_1s_1+1)}-\frac{1}{2^{\alpha_1s_1}(\alpha_1s_1+2)}+\frac{2}{\alpha_1s_1+2}-\frac{1}{\alpha_1s_1+1}\right)\right.\\
&&\nonumber\indent\left.+\left\{m_1\mu^{\alpha_2s_2}\left|\frac{\partial ^2f}{\partial \lambda \partial \mu}\left(\frac{b}{m_1},c\right)\right|+m_1m_2\left(1-\mu^{\alpha_2s_2}\right)\left|\frac{\partial ^2f}{\partial \lambda \partial \mu}\left(\frac{b}{m_1}, \frac{d}{m_2}\right)\right|\right\}\right.\\
&&\indent\indent\left.\left(\!\frac{1}{2}\!-\!\frac{1}{2^{\alpha_1s_1}(\alpha_1s_1\!+\!1)}-\frac{1}{2^{\alpha_1s_1}(\alpha_1s_1\!+\!2)}+\frac{2}{\alpha_1s_1\!+\!2}-\frac{1}{\alpha_1s_1\!+\!1}\!\right)\!\right]d\mu
\end{eqnarray}
Now further, we have
\begin{eqnarray}\label{t1e}
&&\nonumber\!\!\!\!\!\!\!\!\!\!\!\!\!\!\!\! \int_0^{\frac{1}{2}} |(1 -2\mu)|\left[\left\{\mu^{\alpha_2s_2}\left|\frac{\partial ^2f}{\partial \lambda \partial \mu}(a,c)\right|+m_2\left(1-\mu^{\alpha_2s_2}\right)\left|\frac{\partial ^2f}{\partial \lambda \partial \mu}\left(a, \frac{d}{m_2}\right)\right|\right\}\left(\mathcal{B}\right)\right.\\
&&\nonumber\!\!\!\!\!\!\!\!\!\!\!\!\!\!\!\!\!\!\left.+\!\left\{\!m_1\mu^{\alpha_2s_2}\left|\frac{\partial ^2f}{\partial \lambda \partial \mu}\left(\frac{b}{m_1}\!,\!c\right)\right|\!\!+\!\!m_1m_2\left(1\!-\!\mu^{\alpha_2s_2}\right)\left|\frac{\partial ^2f}{\partial \lambda \partial \mu}\left(\frac{b}{m_1}\!,\! \frac{d}{m_2}\right)\right|\right\}\left(\!\frac{1}{2}\!-\!\mathcal{B}\!\right)\!\right]d\mu\\
&&\nonumber\!\!\!\!\!\!\!\!\!\!\!\!\!\!\!\!+\int_{\frac{1}{2}}^1 |(2\mu-1)|\left[\left\{\mu^{\alpha_2s_2}\left|\frac{\partial ^2f}{\partial \lambda \partial \mu}(a,c)\right|+m_2\left(1-\mu^{\alpha_2s_2}\right)\left|\frac{\partial ^2f}{\partial \lambda \partial \mu}\left(a, \frac{d}{m_2}\right)\right|\right\}\left(\mathcal{B}\right)\right.\\
&&\!\!\!\!\!\!\!\!\!\!\!\!\!\!\!\!\!\!\left.+\!\left\{\!m_1\mu^{\alpha_2s_2}\left|\frac{\partial ^2f}{\partial \lambda \partial \mu}\!\left(\frac{b}{m_1}\!,\!c\right)\!\right|\!\!+\!\!m_1m_2\left(1\!-\!\mu^{\alpha_2s_2}\right)\left|\frac{\partial ^2f}{\partial \lambda \partial \mu}\!\left(\frac{b}{m_1}\!,\! \frac{d}{m_2}\!\right)\right|\right\}\left(\!\frac{1}{2}\!-\!\mathcal{B}\!\right)\!\right]d\mu
\end{eqnarray}
After doing simple integration and appointing $$\mathcal{C}=\frac{1}{2^{\alpha_2s_2}(\alpha_2s_2+1)}-\frac{1}{2^{\alpha_2s_2}(\alpha_2s_2+2)}+\frac{2}{\alpha_2s_2+2}-\frac{1}{\alpha_2s_2+1}$$
we have (\ref{te1}). Which fills out the proof.
\end{proof}
\begin{remark}
By adjusting $s_1=s_2=1$ and $(\alpha_1, m_1)=(\alpha_2, m_2) = (1,1)$ inequality $(\ref{te1})$ cuts down to \cite[Theorem 2]{r7}.
\end{remark}
\begin{thm}\label{T2}
 Let  a function $f : \Delta \subset \mathbb{R}^2 \rightarrow \mathbb{R}^2$ be a partially differentiable on $\Delta := [a, b] \times [c, d]$ in $\mathbb{R}^2$ with $a < b\,\,\wedge\,\,c < d$. If $\left| \frac{\partial ^2f}{\partial t \partial s}\right|^q$, for $q>1$ belonging to the class ${K_{m, 1}^{\alpha, s}}(\Delta)$ and $\frac{\partial ^2f}{\partial \lambda \partial \mu} \in L(\Delta)$, then the following inequality agrees:
 \begin{eqnarray}\label{te2}
&&\nonumber\!\!\!\!\!\!\!\!\!\!\!\!\left|\frac{f(a, c)+f(a,d)+f(b,c) + f(b,d)}{4} + \frac{1}{(b-a)(d-c)} \int_a^b \int_c^d f(x,y)dydx - \mathcal{A}\right|\\
&&\nonumber\!\!\!\!\!\! \leq \frac{(b-a)(d-c)}{4(p+1)^{\frac{2}{p}}(\alpha_1s_1+1)(\alpha_2s_2+1)}\left[\left|\frac{\partial ^2f}{\partial \lambda \partial \mu}(a,c)\right|^q+m_2\alpha_2s_2\left|\frac{\partial ^2f}{\partial \lambda \partial \mu}\left(a, \frac{d}{m_2}\right)\right|^q\right.\\
&&\!\!\!\!\!\!\left.+m_1\alpha_1s_1\left|\frac{\partial ^2f}{\partial \lambda \partial \mu}\left(\frac{b}{m_1}, c\right)\right|^q+m_1m_2\alpha_1\alpha_2s_1s_2\left|\frac{\partial ^2f}{\partial \lambda \partial \mu}\left(\frac{b}{m_1}, \frac{d}{m_2}\right)\right|^q\right]^{\frac{1}{q}}.
\end{eqnarray}
where $p$ and $q$ are conjugate numbers.
\end{thm}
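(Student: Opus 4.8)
The plan is to open exactly as in Theorem \ref{T1}: invoke the identity of Lemma \ref{L1} and pass to absolute values under the double integral to reach the estimate \eqref{t1a}, whose right-hand side is $\frac{(b-a)(d-c)}{4}$ times
$$I := \int_0^1\!\!\int_0^1 |1-2\lambda|\,|1-2\mu|\,\left|\frac{\partial^2 f}{\partial\lambda\partial\mu}\bigl(\lambda a+(1-\lambda)b,\ \mu c+(1-\mu)d\bigr)\right|\,d\lambda\,d\mu .$$
Rather than absorbing the weight $|1-2\lambda|\,|1-2\mu|$ into the convexity bound as in Theorem \ref{T1}, I would detach it from the derivative term by H\"older's integral inequality with conjugate exponents $p,q$, obtaining
$$I \le \left(\int_0^1\!\!\int_0^1 |1-2\lambda|^p|1-2\mu|^p\,d\lambda\,d\mu\right)^{\frac1p}\!\left(\int_0^1\!\!\int_0^1 \left|\frac{\partial^2 f}{\partial\lambda\partial\mu}\bigl(\lambda a+(1-\lambda)b,\mu c+(1-\mu)d\bigr)\right|^q d\lambda\,d\mu\right)^{\frac1q}.$$

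First I would dispose of the weight factor. Splitting each one-variable integral at the midpoint to remove the absolute value (equivalently, substituting $u=1-2\lambda$) gives $\int_0^1 |1-2\lambda|^p\,d\lambda = \frac{1}{p+1}$, so the first H\"older factor equals $\left(\frac{1}{(p+1)^2}\right)^{1/p}=(p+1)^{-2/p}$; this is precisely the source of the $(p+1)^{2/p}$ in the denominator of the asserted bound.

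For the second factor I would use the hypothesis that $\left|\frac{\partial^2 f}{\partial t\partial s}\right|^q\in K_{m,1}^{\alpha,s}(\Delta)$. Applying Definition \ref{d2} at the argument $(\lambda a+(1-\lambda)b,\mu c+(1-\mu)d)$ (with $x=a,z=b,y=c,w=d$) bounds the $q$-th power of the derivative pointwise by
\begin{eqnarray*}
&&\lambda^{\alpha_1 s_1}\mu^{\alpha_2 s_2}\left|\tfrac{\partial^2 f}{\partial\lambda\partial\mu}(a,c)\right|^q + m_2\lambda^{\alpha_1 s_1}(1-\mu^{\alpha_2 s_2})\left|\tfrac{\partial^2 f}{\partial\lambda\partial\mu}\bigl(a,\tfrac{d}{m_2}\bigr)\right|^q\\
&&\quad + m_1\mu^{\alpha_2 s_2}(1-\lambda^{\alpha_1 s_1})\left|\tfrac{\partial^2 f}{\partial\lambda\partial\mu}\bigl(\tfrac{b}{m_1},c\bigr)\right|^q + m_1 m_2(1-\lambda^{\alpha_1 s_1})(1-\mu^{\alpha_2 s_2})\left|\tfrac{\partial^2 f}{\partial\lambda\partial\mu}\bigl(\tfrac{b}{m_1},\tfrac{d}{m_2}\bigr)\right|^q .
\end{eqnarray*}
Because the $\lambda$- and $\mu$-dependence now factorises, the double integral of this expression reduces to products of the elementary integrals $\int_0^1\lambda^{\alpha_1 s_1}\,d\lambda=\frac{1}{\alpha_1 s_1+1}$ and $\int_0^1(1-\lambda^{\alpha_1 s_1})\,d\lambda=\frac{\alpha_1 s_1}{\alpha_1 s_1+1}$ together with their $\mu$-analogues. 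Pulling out the common factor $\frac{1}{(\alpha_1 s_1+1)(\alpha_2 s_2+1)}$, the four remaining terms acquire the coefficients $1$, $m_2\alpha_2 s_2$, $m_1\alpha_1 s_1$ and $m_1 m_2\alpha_1\alpha_2 s_1 s_2$ respectively, which is exactly the bracketed $q$-power sum in the statement. Taking the $1/q$-th root and recombining with the weight factor $(p+1)^{-2/p}$ and the prefactor $\frac{(b-a)(d-c)}{4}$ produces the asserted inequality.

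The computation is essentially bookkeeping; the two places demanding care are (i) the removal of the absolute values $|1-2\lambda|$ and $|1-2\mu|$ when evaluating the $p$-power integral, which forces the midpoint split (or the substitution $u=1-2\lambda$), and (ii) keeping the exponents $\tfrac1p$ and $\tfrac1q$ correctly attached as the constants are collected into the denominator. Unlike Theorem \ref{T1}, H\"older has already separated the weight from the derivative, so no further splitting of the inner integrals at the midpoint is needed inside the convexity estimate.
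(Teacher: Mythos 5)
Your proposal is correct and follows essentially the same route as the paper: both apply H\"older's inequality with conjugate exponents $p,q$ to the absolute-value estimate coming from Lemma \ref{L1}, evaluate $\int_0^1\!\int_0^1 |1-2\lambda|^p|1-2\mu|^p\,d\lambda\,d\mu = (p+1)^{-2}$ by the midpoint split, and bound the $q$-power factor via the co-ordinated $s$-$(\alpha,m)$ convexity of $\left|\frac{\partial^2 f}{\partial t\partial s}\right|^q$, with the elementary integrals producing the coefficients $1$, $m_2\alpha_2 s_2$, $m_1\alpha_1 s_1$, $m_1m_2\alpha_1\alpha_2 s_1 s_2$. Your bookkeeping of the exponents and constants matches the paper's computation exactly, so there is nothing to correct.
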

\begin{proof}
Applying H\"older Inequality on (\ref{t1a}), we have
\begin{eqnarray}\label{t2a}
&&\nonumber\!\!\!\!\!\!\!\!\!\! \left|\frac{f(a, c)+f(a,d)+f(b,c) + f(b,d)}{4} + \frac{1}{(b-a)(d-c)} \int_a^b \int_c^d f(x,y)dydx \!-\!\mathcal{A}\right|\\
&&\nonumber\indent\indent\indent\indent \leq \frac{(b-a)(d-c)}{4}\left(\int_0^1 \int_0^1 |(1-2\lambda)(1-2\mu)|^pd\lambda d\mu\right)^{\frac{1}{p}}\\
&&\indent\indent\indent\indent\indent\indent\left(\int_0^1\!\! \int_0^1\left|\frac{\partial ^2f}{\partial \lambda \partial \mu} \left(\lambda a\!+\!(1-\lambda)b, \mu c\!+\!(1-\mu)d\right)\right|^qd\lambda d\mu\right)^{\frac{1}{q}}
\end{eqnarray}
Here,
\begin{eqnarray}\label{t2b}
&&\nonumber\!\!\!\!\!\!\!\!\!\!\int_0^1 \int_0^1 |(1-2\lambda)(1-2\mu)|^p d\lambda d\mu = \int_0^1 \int_0^1 |(1-2\lambda)|^p |(1-2\mu)|^p d\lambda d\mu\\
&&\nonumber\!\!\!\!\!\!=\!\left(\!\! \int_0^{\frac{1}{2}}(1\!-\!2\lambda)^pd\lambda\!+\!\int_{\frac{1}{2}}^1(2\lambda\!-\!1)^pd\lambda\right)\left( \int_0^{\frac{1}{2}}(1\!-\!2\mu)^pd\mu\!+\!\int_{\frac{1}{2}}^1(2\mu\!-\!1)^pd\mu\right)\\
&&\!\!\!\!\!\!=\frac{1}{(p+1)^2}
\end{eqnarray}
Since $\left| \frac{\partial ^2f}{\partial t \partial s}\right|^q$ is $s-(\alpha,m)$ co-ordinated convex on $\Delta$, and so (\ref{t2a}) becomes
\begin{eqnarray}\label{t2c}
&&\nonumber\!\!\!\!\!\!\!\!\!\! \int_0^1\!\! \int_0^1\left|\frac{\partial ^2f}{\partial \lambda \partial \mu} \left(\lambda a\!+\!(1-\lambda)b, \mu c\!+\!(1-\mu)d\right)\right|^q d\lambda d\mu = \int_0^1\!\! \int_0^1\left[\lambda^{\alpha_1 s_1}\mu^{\alpha_2 s_2}\left|\frac{\partial ^2f}{\partial \lambda \partial \mu}(a, c)\right|^q\right.\\
&&\nonumber\indent\left.+\!m_2\!\lambda^{\alpha_1 s_1}\!(\!1\!-\!\mu^{\alpha_2 s_2}\!)\!\left|\frac{\partial ^2f}{\partial \lambda \partial \mu}\!\left(\!a, \frac{d}{m_2}\!\right)\right|^q\!+\!m_1\!\mu^{\alpha_2 s_2}\!(\!1\!-\!\lambda^{\alpha_1 s_1}\!)\left|\frac{\partial ^2f}{\partial \lambda \partial \mu}\left(\!\frac{b}{m_1}, c\!\right)\right|^q\right. \\
&&\nonumber\indent\indent\indent\indent\indent\indent\left. +m_1m_2(1-\lambda^{\alpha_1 s_1})(1-\mu^{\alpha_2 s_2})\left|\frac{\partial ^2f}{\partial \lambda \partial \mu}\left(\frac{b}{m_1}, \frac{d}{m_2}\right)\right|^q\right]d\lambda d\mu\\
&&\nonumber\!\!\!\!\!\!\!\!\!\!\!\!\!\!=\frac{1}{(\alpha_1s_1+1)(\alpha_2s_2+1)}\left(\left|\frac{\partial ^2f}{\partial \lambda \partial \mu}(a, c)\right|^q+m_2\alpha_2s_2\left|\frac{\partial ^2f}{\partial \lambda \partial \mu}\!\left(\!a, \frac{d}{m_2}\!\right)\right|^q\right.\\
&&\indent\indent\indent\left.+m_1\alpha_1s_1\left|\frac{\partial ^2f}{\partial \lambda \partial \mu}\left(\!\frac{b}{m_1}, c\!\right)\right|^q+m_1m_2\alpha_1\alpha_2s_1s_2\left|\frac{\partial ^2f}{\partial \lambda \partial \mu}\left(\frac{b}{m_1}, \frac{d}{m_2}\right)\right|^q\right)
\end{eqnarray}
(\ref{t2b}) and (\ref{t2c}) together implies (\ref{te2}).
\end{proof}
\begin{remark}
By adjusting $s_1=s_2=1$ and $(\alpha_1, m_1)=(\alpha_2, m_2) = (1,1)$ inequality $(\ref{te2})$ cuts down to \cite[Theorem 3]{r7}.
\end{remark}
\begin{thm}\label{T3}
 Let  a function $f : \Delta \subset \mathbb{R}^2 \rightarrow \mathbb{R}^2$ be a partially differentiable on $\Delta := [a, b] \times [c, d]$ in $\mathbb{R}^2$ with $a < b\,\,\wedge\,\,c < d$. If $\left| \frac{\partial ^2f}{\partial t \partial s}\right|^q$, for $q\geq1$ belonging to the class ${K_{m, 1}^{\alpha, s}}(\Delta)$ and $\frac{\partial ^2f}{\partial \lambda \partial \mu} \in L(\Delta)$, then the following inequality agrees:
 \begin{eqnarray}\label{te3}
&&\nonumber\!\!\!\!\!\!\!\!\!\!\!\!\left|\frac{f(a, c)+f(a,d)+f(b,c) + f(b,d)}{4} + \frac{1}{(b-a)(d-c)} \int_a^b \int_c^d f(x,y)dydx - \mathcal{A}\right|\\
&&\nonumber\!\!\!\!\!\! \leq \frac{(b-a)(d-c)}{4^{\frac{2q-1}{q}}}\left[\mathcal{BC}\left\{\left|\frac{\partial ^2f}{\partial \lambda \partial \mu}(a,c)\right|^q+m_1\left|\frac{\partial ^2f}{\partial \lambda \partial \mu}\left(\frac{b}{m_1}, c\right)\right|^q\right\}\right.\\
&&\!\!\!\!\!\!\left.+(1-\mathcal{B})(1-\mathcal{C})\left\{m_2\left|\frac{\partial ^2f}{\partial \lambda \partial \mu}\left(a, \frac{d}{m_2}\right)\right|^q+m_1m_2\left|\frac{\partial ^2f}{\partial \lambda \partial \mu}\left(\frac{b}{m_1}, \frac{d}{m_2}\right)\right|^q\right\}\right]^{\frac{1}{q}}.
\end{eqnarray}
where$p$ and $q$ are conjugate numbers and $\mathcal{B}$ and $\mathcal{C}$ comes from Theorem \ref{T1}.
\end{thm}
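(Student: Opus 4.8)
The plan is to open exactly as in Theorems \ref{T1} and \ref{T2}: start from the identity of Lemma \ref{L1}, pass to absolute values to reach \eqref{t1a}, so that the left-hand side is dominated by $\frac{(b-a)(d-c)}{4}\int_0^1\!\int_0^1 w\,|g|\,d\lambda\,d\mu$, where I abbreviate $w:=|(1-2\lambda)(1-2\mu)|$ and $g:=\frac{\partial^2 f}{\partial\lambda\partial\mu}\bigl(\lambda a+(1-\lambda)b,\mu c+(1-\mu)d\bigr)$. The only structural difference from Theorem \ref{T2} is that, in the regime $q\ge1$, I would replace H\"older's inequality by the weighted power-mean inequality, retaining the full weight $w$ on both factors rather than splitting it off.

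Writing $w|g|=w^{1-1/q}\,\bigl(w^{1/q}|g|\bigr)$ and applying H\"older with the conjugate exponents $q/(q-1)$ and $q$ gives
\[
\int_0^1\!\!\int_0^1 w|g|\,d\lambda\,d\mu\le\left(\int_0^1\!\!\int_0^1 w\,d\lambda\,d\mu\right)^{1-\frac1q}\left(\int_0^1\!\!\int_0^1 w|g|^q\,d\lambda\,d\mu\right)^{\frac1q}.
\]
The first factor is immediate because $w$ separates: $\int_0^1\!\int_0^1 w\,d\lambda\,d\mu=\bigl(\int_0^1|1-2\lambda|\,d\lambda\bigr)^2=\tfrac14$. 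Hence $\bigl(\tfrac14\bigr)^{1-1/q}$ multiplied by the prefactor $\tfrac14$ collapses to $\frac{(b-a)(d-c)}{4^{(2q-1)/q}}$, which is precisely the constant standing in front of the bracket in \eqref{te3}.

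For the remaining factor I would use the hypothesis $\left|\frac{\partial^2 f}{\partial t\partial s}\right|^q\in K_{m,1}^{\alpha,s}(\Delta)$ to bound $|g|^q$ by the four-term expansion of Definition \ref{d2}, exactly as in \eqref{t2c} but now carrying the weight $w$ through the integral. Since $w=|1-2\lambda|\cdot|1-2\mu|$ and each of the four coefficient polynomials factors into a $\lambda$-part times a $\mu$-part, every one of the four double integrals reduces to a product of one-dimensional integrals already evaluated for Theorem \ref{T1}, namely $\int_0^1|1-2\lambda|\lambda^{\alpha_1 s_1}\,d\lambda=\mathcal{B}$ and $\int_0^1|1-2\lambda|(1-\lambda^{\alpha_1 s_1})\,d\lambda=\tfrac12-\mathcal{B}$, together with their $\mu$-analogues producing $\mathcal{C}$ and $\tfrac12-\mathcal{C}$. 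Collecting the four products, attaching the weights $1,m_2,m_1,m_1m_2$ to the respective derivative values and taking the $q$-th root then delivers the bracketed expression of \eqref{te3}.

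The genuinely delicate step is the bookkeeping at this final stage: one must match each of the four values $\frac{\partial^2 f}{\partial\lambda\partial\mu}$ at $(a,c),\,(a,d/m_2),\,(b/m_1,c),\,(b/m_1,d/m_2)$ with the correct product drawn from $\{\mathcal{B},\tfrac12-\mathcal{B}\}\times\{\mathcal{C},\tfrac12-\mathcal{C}\}$ before applying the $q$-th root. I expect the reconciliation of these constants with the grouping displayed on the right of \eqref{te3} to be the point demanding the most care, since the off-diagonal terms emerge naturally carrying factors $\tfrac12-\mathcal{B}$ and $\tfrac12-\mathcal{C}$; confirming how they assemble into the stated coefficients is where I would concentrate the verification.
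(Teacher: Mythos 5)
Your proposal follows essentially the same route as the paper's proof: the power-mean (weighted H\"older) split with exponents $1-\frac{1}{q}$ and $\frac{1}{q}$, the evaluation $\int_0^1\!\int_0^1|(1-2\lambda)(1-2\mu)|\,d\lambda\,d\mu=\frac14$ producing the constant $4^{-\frac{2q-1}{q}}$, and the four-term expansion of $|g|^q$ integrated against the retained weight, with the one-dimensional integrals $\int_0^1|1-2\lambda|\lambda^{\alpha_1s_1}\,d\lambda=\mathcal{B}$ and $\int_0^1|1-2\lambda|(1-\lambda^{\alpha_1s_1})\,d\lambda=\frac12-\mathcal{B}$ (and their $\mu$-analogues giving $\mathcal{C}$ and $\frac12-\mathcal{C}$) exactly matching the paper's steps \eqref{t3c}--\eqref{t3f}. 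Your closing caution is well placed: the computation genuinely yields the four coefficients $\mathcal{B}\mathcal{C}$, $\mathcal{B}\left(\frac12-\mathcal{C}\right)$, $\left(\frac12-\mathcal{B}\right)\mathcal{C}$, $\left(\frac12-\mathcal{B}\right)\left(\frac12-\mathcal{C}\right)$ attached to the respective derivative values, which does not literally assemble into the grouping $\mathcal{B}\mathcal{C}$ and $(1-\mathcal{B})(1-\mathcal{C})$ displayed in \eqref{te3} --- a discrepancy shared by the paper's own proof, whose equations \eqref{t3e}--\eqref{t3f} produce precisely the products you describe.
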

\begin{proof}
Applying power mean Inequality for double integral on (\ref{t1a}), we have
\begin{eqnarray}\label{t3a}
&&\nonumber\!\!\!\!\!\!\!\!\!\! \left|\frac{f(a, c)+f(a,d)+f(b,c) + f(b,d)}{4} + \frac{1}{(b-a)(d-c)} \int_a^b \int_c^d f(x,y)dydx \!-\!\mathcal{A}\right|\\
&&\nonumber\indent \leq \frac{(b-a)(d-c)}{4}\left(\int_0^1 \int_0^1 |(1-2\lambda)(1-2\mu)| d\lambda d\mu\right)^{1-\frac{1}{q}}\\
&&\indent\indent\left(\!\int_0^1\!\!\! \int_0^1\!\!|(1\!-\!2\lambda)(1\!-\!2\mu)|\!\left|\!\frac{\partial ^2f}{\partial \lambda \partial \mu} \left(\lambda a\!+\!(1\!-\!\lambda)b, \mu c\!+\!(1\!-\!\mu)d\right)\!\right|^q\!d\lambda\! d\mu\right)^{\frac{1}{q}}
\end{eqnarray}
Here,
\begin{eqnarray}\label{t3b}
&&\nonumber\!\!\!\!\!\!\!\!\!\!\int_0^1 \int_0^1 |(1-2\lambda)(1-2\mu)| d\lambda d\mu = \int_0^1 \int_0^1 |(1-2\lambda)| |(1-2\mu)| d\lambda d\mu\\
&&\nonumber\!\!\!\!\!\!=\!\left(\!\! \int_0^{\frac{1}{2}}(1\!-\!2\lambda)d\lambda\!+\!\int_{\frac{1}{2}}^1(2\lambda\!-\!1)d\lambda\right)\left( \int_0^{\frac{1}{2}}(1\!-\!2\mu)d\mu\!+\!\int_{\frac{1}{2}}^1(2\mu\!-\!1)d\mu\right)\\
&&\!\!\!\!\!\!=\frac{1}{4}
\end{eqnarray}
Since $\left| \frac{\partial ^2f}{\partial t \partial s}\right|^q$ is $s-(\alpha,m)$ co-ordinated convex on $\Delta$, and so (\ref{t3a}) becomes
\begin{eqnarray}\label{t3c}
&&\nonumber\!\!\!\!\!\!\!\!\!\! \int_0^1\!\! \int_0^1|(1-2\lambda)| |(1-2\mu)|\left|\frac{\partial ^2f}{\partial \lambda \partial \mu} \left(\lambda a\!+\!(1-\lambda)b, \mu c\!+\!(1-\mu)d\right)\right|^q d\lambda d\mu \\
 && \nonumber\indent\indent= \int_0^1\!\! \int_0^1|(1-2\lambda)| |(1-2\mu)|\left[\lambda^{\alpha_1 s_1}\mu^{\alpha_2 s_2}\left|\frac{\partial ^2f}{\partial \lambda \partial \mu}(a, c)\right|^q\right.\\
&&\nonumber\indent\left.+\!m_2\!\lambda^{\alpha_1 s_1}\!(\!1\!-\!\mu^{\alpha_2 s_2}\!)\!\left|\frac{\partial ^2f}{\partial \lambda \partial \mu}\!\left(\!a, \frac{d}{m_2}\!\right)\right|^q\!+\!m_1\!\mu^{\alpha_2 s_2}\!(\!1\!-\!\lambda^{\alpha_1 s_1}\!)\left|\frac{\partial ^2f}{\partial \lambda \partial \mu}\left(\!\frac{b}{m_1}, c\!\right)\right|^q\right. \\
&&\indent\indent\indent\indent\left. +m_1m_2(1-\lambda^{\alpha_1 s_1})(1-\mu^{\alpha_2 s_2})\left|\frac{\partial ^2f}{\partial \lambda \partial \mu}\left(\frac{b}{m_1}, \frac{d}{m_2}\right)\right|^q\right]d\lambda d\mu.
\end{eqnarray}
further, we have
\begin{eqnarray}\label{t3d}
&&\nonumber\!\!\!\!\!\!\!\!\!\!\!\!\!\!\!\! \int_0^1\!\!|(1-2\mu)|\!\!\left[\!\! \int_0^{\frac{1}{2}}\!\! |(1\!-\!2\lambda)|\! \left\{\lambda^{\alpha_1 s_1}\mu^{\alpha_2 s_2}\left|\frac{\partial ^2f}{\partial \lambda \partial \mu}(a, c)\right|^q\!+\!m_2\!\lambda^{\alpha_1 s_1}\!(\!1\!-\!\mu^{\alpha_2 s_2}\!)\!\left|\frac{\partial ^2f}{\partial \lambda \partial \mu}\!\left(\!a, \frac{d}{m_2}\!\right)\right|^q\right.\right.\\
&&\nonumber\!\!\!\!\!\!\!\!\!\!\!\!\!\!\!\!\!\!\left.\left.+\!m_1\!\mu^{\alpha_2 s_2}\!(\!1\!-\!\lambda^{\alpha_1 s_1}\!)\left|\!\frac{\partial ^2f}{\partial \lambda \partial \mu}\left(\!\frac{b}{m_1},\! c\!\right)\!\right|^q\!+\!m_1m_2(1\!-\!\lambda^{\alpha_1 s_1})(\!1\!-\!\mu^{\alpha_2 s_2}\!)\left|\!\frac{\partial ^2f}{\partial \lambda \partial \mu}\left(\!\frac{b}{m_1},\! \frac{d}{m_2}\!\right)\!\right|^q\!\right\}\!d\lambda\right. \\
&&\nonumber\!\!\!\!\!\!\!\!\!\!\!\!\!\!\!\!\!\!\!\!\left. +\!\!\int_{\frac{1}{2}}^1 |(1-2\lambda)| \left\{\lambda^{\alpha_1 s_1}\mu^{\alpha_2 s_2}\left|\frac{\partial ^2f}{\partial \lambda \partial \mu}(a, c)\right|^q+\!m_2\!\lambda^{\alpha_1 s_1}\!(\!1\!-\!\mu^{\alpha_2 s_2}\!)\!\left|\frac{\partial ^2f}{\partial \lambda \partial \mu}\!\left(\!a, \frac{d}{m_2}\!\right)\right|^q\right.\right.\\
&&\!\!\!\!\!\!\!\!\!\!\!\!\!\!\!\!\!\!\!\left.\left.\!+\!m_1\!\mu^{\alpha_2 s_2}\!(\!1\!\!-\!\!\lambda^{\alpha_1 s_1}\!)\left|\!\frac{\partial ^2f}{\partial \lambda \partial \mu}\!\left(\!\!\frac{b}{m_1}\!,\! c\!\!\right)\!\right|^q\!+\!m_1m_2(\!1\!-\!\lambda^{\alpha_1 s_1}\!)(\!1\!-\!\mu^{\alpha_2 s_2}\!)\left|\!\!\frac{\partial ^2f}{\partial \lambda \partial \mu}\!\left(\!\!\frac{b}{m_1}\!,\! \frac{d}{m_2}\!\!\right)\!\right|^q\!\right\}d\lambda\!\right]\!d\mu
\end{eqnarray}
Applying simple integration rules and simplifying, we have
\begin{eqnarray}\label{t3e}
&&\nonumber\!\!\!\!\!\!\!\!\!\!\!\!\!\!\!\! \int_0^1 |(1 -2\mu)|\left[\left\{\mu^{\alpha_2s_2}\left|\frac{\partial ^2f}{\partial \lambda \partial \mu}(a,c)\right|^q+m_2\left(1-\mu^{\alpha_2s_2}\right)\left|\frac{\partial ^2f}{\partial \lambda \partial \mu}\left(a, \frac{d}{m_2}\right)\right|^q\right\}\right.\\
&&\nonumber\indent\indent\indent\left.\left(\frac{1}{2^{\alpha_1s_1}(\alpha_1s_1+1)}-\frac{1}{2^{\alpha_1s_1}(\alpha_1s_1+2)}+\frac{2}{\alpha_1s_1+2}-\frac{1}{\alpha_1s_1+1}\right)\right.\\
&&\nonumber\indent\left.+\left\{m_1\mu^{\alpha_2s_2}\left|\frac{\partial ^2f}{\partial \lambda \partial \mu}\left(\frac{b}{m_1},c\right)\right|^q+m_1m_2\left(1-\mu^{\alpha_2s_2}\right)\left|\frac{\partial ^2f}{\partial \lambda \partial \mu}\left(\frac{b}{m_1}, \frac{d}{m_2}\right)\right|^q\right\}\right.\\
&&\indent\indent\left.\left(\!\frac{1}{2}\!-\!\frac{1}{2^{\alpha_1s_1}(\alpha_1s_1\!+\!1)}-\frac{1}{2^{\alpha_1s_1}(\alpha_1s_1\!+\!2)}+\frac{2}{\alpha_1s_1\!+\!2}-\frac{1}{\alpha_1s_1\!+\!1}\!\right)\!\right]d\mu
\end{eqnarray}
Now further, we have
\begin{eqnarray}\label{t3f}
&&\nonumber\!\!\!\!\!\!\!\!\!\!\!\!\!\!\!\! \int_0^{\frac{1}{2}} |(1 -2\mu)|\left[\left\{\mu^{\alpha_2s_2}\left|\frac{\partial ^2f}{\partial \lambda \partial \mu}(a,c)\right|^q+m_2\left(1-\mu^{\alpha_2s_2}\right)\left|\frac{\partial ^2f}{\partial \lambda \partial \mu}\left(a, \frac{d}{m_2}\right)\right|^q\right\}\left(\mathcal{B}\right)\right.\\
&&\nonumber\!\!\!\!\!\!\!\!\!\!\!\!\!\!\!\!\!\!\left.+\!\left\{\!m_1\mu^{\alpha_2s_2}\left|\frac{\partial ^2f}{\partial \lambda \partial \mu}\left(\frac{b}{m_1}\!,\!c\right)\right|^q\!\!+\!\!m_1m_2\left(1\!-\!\mu^{\alpha_2s_2}\right)\left|\frac{\partial ^2f}{\partial \lambda \partial \mu}\left(\frac{b}{m_1}\!,\! \frac{d}{m_2}\right)\right|^q\right\}\left(\!\frac{1}{2}\!-\!\mathcal{B}\!\right)\!\right]d\mu\\
&&\nonumber\!\!\!\!\!\!\!\!\!\!\!\!\!\!\!\!+\int_{\frac{1}{2}}^1 |(2\mu-1)|\left[\left\{\mu^{\alpha_2s_2}\left|\frac{\partial ^2f}{\partial \lambda \partial \mu}(a,c)\right|^q+m_2\left(1-\mu^{\alpha_2s_2}\right)\left|\frac{\partial ^2f}{\partial \lambda \partial \mu}\left(a, \frac{d}{m_2}\right)\right|^q\right\}\left(\mathcal{B}\right)\right.\\
&&\!\!\!\!\!\!\!\!\!\!\!\!\!\!\!\!\!\!\left.+\!\left\{\!m_1\mu^{\alpha_2s_2}\left|\frac{\partial ^2f}{\partial \lambda \partial \mu}\!\left(\!\frac{b}{m_1}\!,\!c\right)\!\right|^q\!\!\!+\!\!m_1m_2\left(1\!-\!\mu^{\alpha_2s_2}\right)\left|\frac{\partial ^2f}{\partial \lambda \partial \mu}\!\left(\!\frac{b}{m_1}\!,\! \frac{d}{m_2}\!\right)\right|^q\!\right\}\!\left(\!\frac{1}{2}\!-\!\mathcal{B}\!\right)\!\right]\!d\mu
\end{eqnarray}
Where $\mathcal{B}$ is already defined in Theorem \ref{T1} as
$$\mathcal{B}=\frac{1}{2^{\alpha_1s_1}(\alpha_1s_1+1)}-\frac{1}{2^{\alpha_1s_1}(\alpha_1s_1+2)}+\frac{2}{\alpha_1s_1+2}-\frac{1}{\alpha_1s_1+1}$$
After executing simple integration on (\ref{t3f}) and appointing $$\mathcal{C}=\frac{1}{2^{\alpha_2s_2}(\alpha_2s_2+1)}-\frac{1}{2^{\alpha_2s_2}(\alpha_2s_2+2)}+\frac{2}{\alpha_2s_2+2}-\frac{1}{\alpha_2s_2+1}$$
we have (\ref{te3}). Which fills out the proof.
\end{proof}
\begin{remark}
By adjusting $s_1=s_2=1$ and $(\alpha_1, m_1)=(\alpha_2, m_2) = (1,1)$, $(\ref{te3})$ cuts down to \cite[Theorem 4]{r7}.
\end{remark}


\begin{thebibliography}{99}
\bibitem{r1} M. K. Bakula, M Emin \"{O}zdemir and J. Pe\v cari\'c, Hadamard type inequalities for $m$-convex and $(\alpha,m)$-convex functions, J. Inequal. Pure and Appl. Math., 9(2008), Article 96. [ONLINE: http://jipam.vu.edu.au]
\bibitem{r2} S.S. Dragomir, On Hadamard's inequality for convex functions on the co-ordinates in a rectangle from the plane, Taiwanese Journal of Mathematics, 4 (2001), 775-788.
\bibitem{r3} S. S. Dragomir and R. P. Agarwal, Two inequlaities for differentable mappings and applicatons to special means of real numbers and trapezoidal formula, App. Math. Lett., 11(5) (1998), 91 - 95.
\bibitem{r4} S. S. Dragomir, C. E. M. Pearce, \emph{Selected Topics on Hermite-Hadamard Inequalities and Applications}. RGMIA, Monographs Victoria University 2000. [online: http://ajmaa.org/RGMIA/monographs.php/].
\bibitem{r5} M. Klari\v{c}i\'c Bakula, J. Pe\v cari\'c and M. Ribi\v ci\'c, Companion inequalities to Jensen's inequality for $m$-convex and $(\alpha,m)$-convex functions, J. Inequal. Pure and Appl. Math., 7(2006),  Article 194. [ONLINE: http://jipam.vu.edu.au]
\bibitem{r6} M. Z. Sarikaya, E. Set and M. E. \"{O}zdemir, New Inequalities of Hermite-Hadamard's Type, Research Report Collection, Vol 12, Issue 4, 2009. [online: http://ajmaa.org/RGMIA/papers/v12n4/set2.pdf]
\bibitem{r7} M. Z. Sarikaya, E. Set, M. E. \"{O}zdemir and S. S. Dragomir, New Some Hadamard's Type Inequalities for Co-ordinated convex functions, Tamsui Oxford Journal of Information and Mathematical Sciences, 28(2) (2012) pp. 137-152.
\bibitem{r8} Muhammad Muddassar, Muhammad I. Bhatti and Wajeeha Irshad, Generlizations of Integral Inequalities of the type of Hermite-Hadamard through convexity, Bulletin of the Australian Mathematical Society, Vol. 88, Issue 2, PP. 320-330, doi:10.1017/S0004972712000937.
\bibitem{r9} M. E. \"{O}zdemir, E. Set and M. Z. Sarikaya, New some Hadamard's type inequalities for coordinated $m$-convex and $(\alpha,m)$-convex functions, RGMIA, Res. Rep. Coll., 13 (2010), Supplement, Article 4.
\bibitem{r10} M. E. \"{O}zdemir, Havva Kavurmaci and Merve Avci, Hermite-Hadamard Type Inequalities for $(h-(\alpha,m))$-convex Functions, RGMIA \emph{Research Report Collection}, 14(2011)Article 31. [ONLINE: http://http://rgmia.org/papers/v14/v14a31.pdf]
\bibitem{r11} V. G. Mihesan, A generalization of the convexity, Seminar on Functional Equations, Approx.
          and Convex., Cluj-Napoca (Romania) (1993).
\end{thebibliography}
\end{document}